\newtheorem{theorem}{Theorem}[section]
\newtheorem*{theorem-nonumber}{Theorem}
\newtheorem{proposition}[theorem]{Proposition}
\newtheorem{lemma}[theorem]{Lemma}
\theoremstyle{remark}
\newtheorem{example}{Example}[section]
\def\G{\mathcal{G}}
\def\R{\mathbb{R}}
\def\Rext{\overline{\mathbb{R}}}
\def\Z{\mathbb{Z}}
\def\conv{\operatorname{conv}}
\def\cconv{\overline{\operatorname{conv}}}
\def\x{{x}}
\def\convc{\overline{\operatorname{conv}}}
\renewcommand{\geq}{\geqslant}
\renewcommand{\leq}{\leqslant}
\newcommand{\exampleqed}{\hfill $\diamond$}
\title{Geoffrion's theorem beyond finiteness and rationality}
\author{Santanu S. Dey,  Fr\'ed\'eric Meunier, Diego A. Mor\'an R.}
\begin{document}

\begin{abstract}
   Geoffrion's theorem is a fundamental result from mathematical programming assessing the quality of Lagrangian relaxation, a standard technique to get bounds for integer programs. An often implicit condition is that the set of feasible solutions is finite or described by rational linear constraints. However, we show through concrete examples that the conclusion of Geoffrion's theorem does not necessarily hold when this condition is dropped. We then provide sufficient conditions ensuring the validity of the result even when the feasible set is not finite and cannot be described using finitely-many linear constraints.
\end{abstract}

\keywords{Lagrangian relaxation; Geoffrion's theorem; Non-linear integer programming}

\maketitle


\section{Introduction}

\subsection{Lagrangian relaxation and Geoffrion's theorem} The exact resolution of integer programs relies in a crucial way on bounds that help assess the quality of feasible solutions. A popular systematic and efficient way to produce lower bounds on integer programs formulated as minimization problems, and more generally dual bounds, is  {\em Lagrangian relaxation}, introduced in 1970 by Held and Karp for the resolution of the traveling salesman problem~\cite{held1970traveling,held1971traveling}, which works as follows.

Consider an integer program
\begin{equation}\label{eq:master}
    \begin{array}{rl}
        \text{minimize} & c \cdot x \\
        \text{s.t.} & Ax \geq b \\
        & x \in X \, ,
    \end{array}
\end{equation}
where $A\in \R^{m\times n}$, $b \in \R^m$, $c \in \R^n$, and $X\subseteq\Z^n$. The function
\[
\G \colon \lambda \in \R_+^m \longmapsto \inf \{ c \cdot x + \lambda \cdot (b-Ax) \colon x \in X \} \in \Rext
\]
is a {\em dual function} of~\eqref{eq:master}. As is well known, $\G(\lambda)$ provides a lower bound on the optimal value of~\eqref{eq:master}. {\em Lagrangian relaxation} involves using such lower bounds to solve~\eqref{eq:master}. 
The method often aims to identify the tightest possible bound achievable in this manner, namely the quantity
\[
v^L \coloneqq \sup \{\G(\lambda) \colon \lambda \in \R_+^m \} \, .
\]

One of the results that best explains the strength of Lagrangian relaxation is Geoffrion's theorem~\cite[Theorem~1]{geoffrion1974lagrangian}. It considers the following optimization problem:
\begin{equation}\label{eq:geoffrion}
    \begin{array}{rl}
        \text{minimize} & c \cdot x \\
        \text{s.t.} & Ax \geq b \\
        & x \in \conv(X) \, ,
    \end{array}
\end{equation}
in the space of the original variables. Denote by $v^\star$ the optimal value of~\eqref{eq:geoffrion} which is defined as the infimum of $c\cdot x$ over the feasible region. Geoffrion's theorem considers the case where $X$ is finite or the set of integer points contained in a rational polyhedron, and states that if~\eqref{eq:master} is feasible, then $v^L=v^\star$. (We remind the reader that a {\em rational polyhedron} is the intersection of finitely many half-spaces, each of which  being described using rational data.) 


The proof of Geoffrion's theorem relies crucially on $\conv(X)$ being polyhedral, which is the case when $X$ is finite or when $X$ is the set of integer points within a rational polyhedron (a consequence of Meyer's theorem~\cite{meyer1974existence}). The theorem does not say anything about the case when $X$ is the set of integer points of an unbounded non-rational polyhedron. The only place in the literature we have been able to find where this fact is explicitly mentioned is a sentence in a paper by Guignard~\cite[p.157]{guignard1998efficient}:
\begin{quote}
``\dots this result may not be true if the constraint matrices are not rational, or more precisely for non-rational polyhedra that are not equal to the convex hull of their extreme points.''
\end{quote}

Our contributions are twofold:
\begin{itemize}
    \item Concrete examples where the conclusion of Geoffrion's theorem does not hold because $X$ is precisely the collection of integer points contained in a polyhedron whose defining inequalities have irrational coefficients.
    \item Sufficient conditions ensuring that the conclusion of Geoffrion's theorem holds, including cases where $X$ is an infinite set of integer points that cannot be described as the integer points in a polyhedron, i.e., $X$ may be the set of integer points in a general convex set.
\end{itemize}

When $X$ is finite or  can be described as the integer points  in a rational polyhedron, its convex hull is closed (also a consequence of Meyer's theorem). This is not necessarily the case when $X$ is the set of integer points in an unbounded non-rational polyhedron. Thus, to investigate Geoffrion's theorem ``beyond finiteness and rationality,'' considering the following optimization problem could also be relevant, which will be confirmed by Example~\ref{ex:ex1} in the next subsection:
\begin{equation}\label{eq:geoffrion-closed}
    \begin{array}{rl}
        \text{minimize} & c \cdot x \\
        \text{s.t.} & Ax \geq b \\
        & x \in \cconv(X) \, ,
    \end{array}
\end{equation}
where $\cconv(X)$ is the closure of $\conv(X)$. Denote by $\bar v^\star$ the optimal value of \eqref{eq:geoffrion-closed}  which is again defined to be the infimum of $c\cdot x$ over the feasible region. For sake of completeness, we state Geoffrion's theorem for the optimization problems~\eqref{eq:geoffrion} and~\eqref{eq:geoffrion-closed} simultaneously.

\begin{theorem-nonumber}[Geoffrion's theorem~\cite{geoffrion1974lagrangian}]
    Suppose that $X$ is finite or formed by the integer points of a rational polyhedron. If~\eqref{eq:master} is feasible, then $v^L = \bar v^\star = v^\star$.
\end{theorem-nonumber}

\subsection{Two examples showing the failure of Geoffrion's theorem beyond finiteness and rationality}\label{subsec:ex}

The first example shows that the vanilla version of Geoffrion's theorem---about the optimization problem~\eqref{eq:geoffrion}---does not hold in full generality when $X$ is the set of integer points of a non-rational polyhedron.

\begin{example}\label{ex:ex1}
    Consider the integer program
    \begin{equation}\label{eq:ex1}
   \begin{array}{rl}
        \text{minimize} & -x \\
        \text{s.t.} & -\sqrt{2}x + y \geq 0 \\
        & -\sqrt{2}x + y \leq 0 \\
        & x, y \in \Z_+ \, .
    \end{array}
    \end{equation}
    
    Note that it is a feasible program since $x=y=0$ is a feasible solution. Set $X = \{(x,y) \in \Z_+^2 \colon -\sqrt{2}x + y \leq 0\}$. See Figure~\ref{fig:ex1} for an illustration.

\begin{figure}[htb]
    \centering
   
\begin{tikzpicture}[scale=0.2]

  \definecolor{halfspaceA}{RGB}{173,216,230} 
  \definecolor{halfspaceB}{RGB}{255,182,193} 

  \draw[->, very thick] (0,0) -- (21,0) node[right] {$x$};
  \draw[->, very thick] (0,0) -- (0,31) node[above] {$y$};

  \foreach \x in {0,5,10,15,20} {
    \draw[thick] (\x,0.4) -- (\x,-0.4) node[below] {\footnotesize \x}; 
  }
  \foreach \y in {5,10,...,30} {
    \draw[thick] (0.4,\y) -- (-0.4,\y) node[left] {\footnotesize \y}; 
  }

  \pgfmathsetmacro{\srtwo}{sqrt(2)}
  \pgfmathsetmacro{\ymax}{30}
  \pgfmathsetmacro{\xmax}{\ymax/\srtwo}

  \fill[halfspaceA,fill opacity=0.5] (0,30) -- (20,30) -- (20,\srtwo*20) -- (0,0) -- cycle;

  \fill[halfspaceB,fill opacity=0.5] (0,0) -- (20,0) -- (20,\srtwo*20) -- (0,0) -- cycle;

  \draw[very thick, black] plot[domain=0:15, samples=100] (\x, {\srtwo*\x});

  \draw[very thick, black, dotted] plot[domain=15:20, samples=100] (\x, {\srtwo*\x});

  \node[black] at (22, {\srtwo*17 + 1}) {\footnotesize $y = \sqrt{2}x$};

  \node[text=blue!80!black] at (6, {\srtwo*5 + 10}) {\footnotesize $-\sqrt{2}x + y \geq 0$};
  \node[text=red!80!black] at (7, {\srtwo*5 - 5}) {\footnotesize $-\sqrt{2}x + y \leq 0$};

  \foreach \x in {1,...,20} {
    \foreach \y in {1,...,30} {
      \fill[gray] (\x,\y) circle (1pt);
    }
  }

\end{tikzpicture}
    \caption{The region defined by the inequality $-\sqrt{2}x+y\geq0$ (resp.\ $-\sqrt{2}x+y\leq0$) is shown in blue (resp.\ red). The ray defined by the intersection of the red and blue regions is shown in black. The set $X$ contains the integral points in the red region (including the points in the $x$-axis). Then the only integer point in $X$ that satisfies the inequality $-\sqrt{2}x+y\geq0$ is (0,0).}
    \label{fig:ex1}
\end{figure}
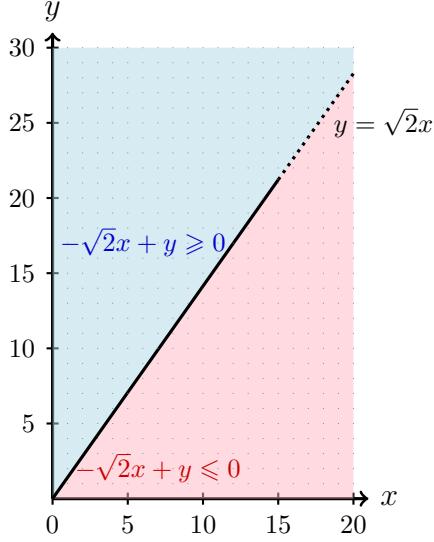


    The dual function in this case is $\G(\lambda) = \inf\{-x + \lambda(\sqrt{2}x-y)\colon (x,y) \in X \}$. For every $x\in \Z_+$, there exists $y \in \Z_+$ such that $(x,y) \in X$ and $\sqrt{2}x - y \leq 1$. Thus, $\G(\lambda) \leq \inf\{-x+\lambda \colon x \in \Z_+\}$, i.e., $\G(\lambda) = -\infty$, for every $\lambda \in \R_+$. In other words, $v^L = -\infty$.

    On the other hand, we have here $v^\star = \inf\{-x \colon -\sqrt{2}x + y \geq 0, \, (x,y) \in \conv(X) \}$. The equality
    \[
    \conv(X) = \{(x,y)\in\R_+^2 \colon -\sqrt{2}x + y \leq 0 \} \setminus \{(x,y)\in\R_+^2 \colon y = \sqrt{2}x, \, x > 0 \}
    \]
    holds (for instance, see the set $K^1$ in Example~1 in~\cite{dey2013some}).
    Hence, $v^\star = \inf\{-x \colon x = y = 0\} = 0$.

    The program~\eqref{eq:ex1} is therefore such that $v^L < v^\star$. \exampleqed
\end{example}

In Example~\ref{ex:ex1}, the closure of $\conv(X)$ is actually $\{(x,y)\in\R_+^2 \colon -\sqrt{2}x + y \leq 0 \}$. Taking in~\eqref{eq:geoffrion} the closure of $\conv(X)$ instead would ensure the validity of Geoffrion's theorem for this example, i.e., $v^L = \bar v^\star$. However, even by considering~\eqref{eq:geoffrion-closed}, there are still cases where Geoffrion's theorem does not hold when $X$ is the set of integer points of an unbounded non-rational polyhedron.


\begin{example}\label{ex:ex2}
    Let $P \coloneqq \conv (P^1 \cup P^2)$ where 
    \[
    \begin{array}{rcl}
    P^1 & \coloneqq & \bigl\{ (x,y,z) \in \R^3 \colon \sqrt{2}x - y \geq 0, \, y\geq 0, \, z = 0\bigr\} \\[1ex]
    P^2 & \coloneqq & \bigl\{(x,y,z) \in \R^3 \colon \sqrt{2}x - y \geq 0, \, y \geq 0, \, x \geq 1/2, \,  z = 1 \bigr\} \, .
    \end{array}
    \]
    The recession cones of $P^1$ and $P^2$ being equal 
    implies $P$ is a polyhedron~\cite{conforti2014integer}. Consider the integer program
    \begin{equation}\label{eq:ex2}
   \begin{array}{rl}
        \text{minimize} & -z \\
        \text{s.t.} & -\sqrt{2}x + y \geq 0 \\
        & (x,y,z) \in X \, ,
    \end{array}
    \end{equation}
    where $X \coloneqq P \cap \Z^3$. See Figure~\ref{fig:ex2} for an illustration.



\tdplotsetmaincoords{75}{205}
\begin{figure}
    \centering
 
\begin{tikzpicture}[tdplot_main_coords, scale=1]

  \draw[->, thick] (0,0,0) -- (5.8,0,0) node[anchor=north east]{$x$};
  \foreach \x in {1,2,3,4,5}
    \draw[thick] (\x,0,0) -- (\x,0,0.1) node[above]{\x};

  \draw[->, thick] (0,0,0) -- (0,8.5,0) node[anchor=north west]{$y$};
  \foreach \y in {1,2,3,4,5,6,7,8}
    \draw[thick] (0,\y,0) -- (-0.15,\y,0) node[right]{\y};

  \draw[->, thick] (0,0,0) -- (0,0,2) node[anchor=south]{$z$};
  \foreach \z in {1}
    \draw[thick] (0,0,\z) -- (-0.15,0,\z) node[right]{\z};

  \fill[blue!30,opacity=1]
    (0,0,0)
    -- (5,0,0)
    -- (5,{sqrt(2)*5},0)
    -- (0,0,0)
    -- cycle;

  \fill[red!30,opacity=0.8]
    (0.5,0,1)
    -- (5,0,1)
    -- (5,{sqrt(2)*5},1)
    -- (0.5,{sqrt(2)*0.5},1)
    -- cycle;

  \fill[orange!40,opacity=0.4]
    (0,0,0) -- (5,0,0) -- (5,0,1) -- (0.5,0,1) -- cycle;

  \fill[orange!40,opacity=0.4]
    (5,0,0) -- (5,{sqrt(2)*5},0) -- (5,{sqrt(2)*5},1) -- (5,0,1) -- cycle;

  \fill[orange!40,opacity=0.4]
    (5,{sqrt(2)*5},0) -- (0,0,0) -- (0.5,{sqrt(2)*0.5},1) -- (5,{sqrt(2)*5},1) -- cycle;

  \fill[orange!40,opacity=0.4]
    (0,0,0) -- (0.5,0,1) -- (0.5,{sqrt(2)*0.5},1) -- cycle;


\draw[thick] (0,0,0) -- (5,{sqrt(2)*5},0);
\draw[thick] (0,0,0) -- (5,0,0);
\draw[thick,dashed] (5,0,0) -- (5,{sqrt(2)*5},0);
\draw[thick] (0.5,{sqrt(2)*0.5},1) -- (5,{sqrt(2)*5},1);
\draw[thick] (0.5,{sqrt(2)*0.5},1) -- (0.5,0,1);
\draw[thick] (0.5,0,1) -- (5,0,1);
\draw[thick,dashed] (5,0,1) -- (5,{sqrt(2)*5},1);

  \draw[thick] (0,0,0) -- (0.5,0,1);
  \draw[thick,dashed] (5,0,0) -- (5,0,1);
  \draw[thick,dashed] (5,{sqrt(2)*5},0) -- (5,{sqrt(2)*5},1);
  \draw[thick] (0,0,0) -- (0.5,{sqrt(2)*0.5},1);

  \node[red] at (4.5,1,1) {\large ${P^2}$};
  \node[blue] at (4.5,5,0) {\large ${P^1}$};
   \node[black] at (4,7.5,0) {\tiny $\{(x,y,z)\in\mathbb{R}^3\colon y=\sqrt{2}x,\ z=0\}$};

\foreach \z in {0}
  \foreach \x in {0,...,5}
    \foreach \y in {0,...,8}
      {
        \pgfmathsetmacro{\limit}{sqrt(2)*\x}
        \pgfmathparse{\y <= \limit ? 1 : 0}
        \ifnum\pgfmathresult=1
          \filldraw[fill=blue!70, draw=blue, line width=0.3pt](\x,\y,\z) circle (0.03);
        \fi
      }
\foreach \z in {1}
  \foreach \x in {0,...,5}
    \foreach \y in {0,...,8}
      {
        \pgfmathsetmacro{\limit}{sqrt(2)*\x}
        \pgfmathparse{\y < \limit ? 1 : 0}
        \ifnum\pgfmathresult=1
          \filldraw[fill=red!70, draw=red, line width=0.3pt] (\x,\y,\z) circle (0.03);
        \fi
      }
        
\end{tikzpicture}
   \caption{The set of points considered in Example~\ref{ex:ex2}.}
    \label{fig:ex2}
\end{figure}
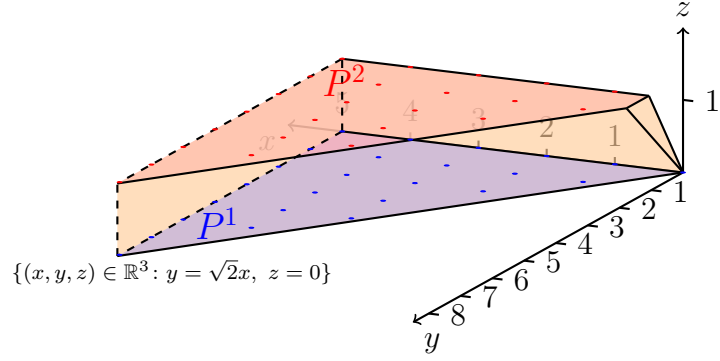


    The dual function in this case is $\G(\lambda) = \inf\{-z + \lambda(\sqrt{2}x - y)\colon (x,y,z) \in X \}$. Notice that $\G(\lambda) \geq -1$ for every $\lambda \in \R_+$. Also, there exists a sequence $(x_i,y_i, 1)$ of points of $X$ such that $\lim_{i \to +\infty}\sqrt{2}x_i - y_i = 0$, which shows that $\G(\lambda) \leq -1$ for every $\lambda \in \R_+$. In other words, $v^L = -1$.

    On the other hand, we have here $\bar v^\star = \inf\{ - z \colon -\sqrt{2}x + y \geq 0, \, (x,y,z) \in \cconv(X) \}$. We will prove now that 
    \begin{equation}\label{eq:inters}
    \cconv(X) \cap \{(x,y,z) \colon -\sqrt{2}x + y \geq 0\} = \{(x,y,z) \colon \sqrt{2}x - y = 0, \, z = 0\}\, . 
    \end{equation} From this equality, it is immediate to check that $\bar v^\star = 0$, and therefore that the instance~\eqref{eq:ex2} is such that $v^L < \bar v^\star$.

    Let us prove~\eqref{eq:inters}. We start by noticing that due to the structure of $X$, the equality $X = (P^1 \cup P^2) \cap \Z^3$ also holds, and from this, we get $\cconv(X) = \cconv\bigl( \cconv(P^1 \cap \Z^3) \cup \cconv(P^2 \cap \Z^3) \bigr)$. It can be verified that (for instance, see the sets $K^1$ and $K^2$ in Example~1 in~\cite{dey2013some})
    \begin{itemize}
        \item $\cconv(P^1 \cap \Z^3) = \{(x,y,z) \colon \sqrt{2}x - y \geq 0, \, y \geq 0, \, z= 0 \}$ and thus $\cconv(P^1 \cap \Z^3) \cap \{(x,y,z) \colon - \sqrt{2}x + y \geq 0\} = \{(x,y,z) \colon \sqrt{2}x - y = 0, \, z = 0\}$. 
        \item $\cconv(P^2 \cap \Z^3) = \conv(P^2 \cap \Z^3)$ and thus $\cconv(P^2 \cap \Z^3) \cap \{(x,y,z) \colon -\sqrt{2}x + y \geq 0\} = \varnothing$. Moreover, the recession cone of $\cconv(P^2 \cap \Z^3)$ is equal to $\cconv(P^1 \cap \Z^3)$.  
    \end{itemize}
    This implies immediately that $\cconv(X) \cap \{(x,y,z) \colon -\sqrt{2}x + y \geq 0\} \supseteq \{(x,y,z) \colon \sqrt{2}x - y = 0, \, z = 0\}$. To prove the inclusion in the other direction, we note that
    \[
        \cconv(P^1 \cap \Z^3) \cup \cconv(P^2 \cap \Z^3) \subseteq \{(x,y,z) \colon \sqrt{2}x - y \geq 0, \, z\geq 0\} \, ,
    \]
    because every point of $P^1$ and every point of $P^2$ satisfy $\sqrt{2}x \geq y$. From this, we deduce that $\cconv(X) \cap \{(x,y,z) \colon -\sqrt{2}x + y \geq 0\} \subseteq \{(x,y,z) \colon \sqrt{2}x - y = 0, \, z \geq 0\}$. We complete the argument as follows: since the recession cones of $\cconv(P^1 \cap \Z^3)$ and $\cconv(P^2 \cap \Z^3)$ are the same, which implies \cite[Corollary~9.8.1]{rockafellar:1970} that $\cconv(X) = \conv\bigl( \cconv(P^1 \cap \Z^3) \cup \conv(P^2 \cap \Z^3) \bigr)$ (we can get rid of the closure on the right-hand-side); if 
    {$\cconv(X) \cap \{(x,y,z) \colon \sqrt{2}x - y = 0, \, z > 0\}$} were non-empty, then there would be a point 
    $(x',y',z')$ in $\cconv(P^2 \cap \Z^3)$ satisfying 
    $\sqrt{2}x' - y' = 0$, which is not possible. \exampleqed
\end{example}

\subsection{Sufficient conditions for Geoffrion's theorem to hold.}\label{subsec:contributions}

{
The examples of Section~\ref{subsec:ex} are ``degenerate'' in the sense that the set of feasible solutions of~\eqref{eq:geoffrion-closed} is located on the boundary of $\conv(X)$. This is not coincidental. The relative interior of $\conv(X)$ having a non-empty intersection with $\{x\in \R^n \colon Ax \geq b\}$ is nothing else than Slater's condition, and basic results from convex programming ensure that the following holds. 
For the sake of completeness, we state this result formally below (for a proof, see Section~\ref{sec:prelim_results}).

\begin{proposition}\label{prop:slater}
    Suppose that the relative interior of $\conv(X)$ has a non-empty intersection with $\{x\in \R^n \colon Ax \geq b\}$. Then $v^L = \bar v^\star = v^\star$.
\end{proposition}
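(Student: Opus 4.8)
The plan is to recognize $v^L$ as the optimal value of the Lagrangian dual of the \emph{convex} program~\eqref{eq:geoffrion} (equivalently of~\eqref{eq:geoffrion-closed}) and then invoke strong convex duality under Slater's condition. The first step is to observe that for every fixed $\lambda\in\R_+^m$ the map $x\mapsto c\cdot x+\lambda\cdot(b-Ax)$ is affine, so its infimum over $X$ equals its infimum over $\conv(X)$, and, by continuity, over $\cconv(X)$ as well; hence
\[
\G(\lambda)=\inf\{c\cdot x+\lambda\cdot(b-Ax)\colon x\in\conv(X)\}=\inf\{c\cdot x+\lambda\cdot(b-Ax)\colon x\in\cconv(X)\}.
\]
In other words, $\G$ is exactly the Lagrangian dual function of the convex programs~\eqref{eq:geoffrion} and~\eqref{eq:geoffrion-closed}.

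Next I would record the easy inequalities. If $\lambda\in\R_+^m$ and $x$ is feasible for~\eqref{eq:geoffrion-closed}, then $\lambda\cdot(b-Ax)\leq0$ and therefore $\G(\lambda)\leq c\cdot x$; taking the supremum over $\lambda$ and the infimum over $x$ gives $v^L\leq\bar v^\star$. Since $\conv(X)\subseteq\cconv(X)$, the feasible region of~\eqref{eq:geoffrion} is contained in that of~\eqref{eq:geoffrion-closed}, so $\bar v^\star\leq v^\star$. Thus $v^L\leq\bar v^\star\leq v^\star$, and it only remains to prove $v^\star\leq v^L$. Note that $v^\star<+\infty$ because the Slater point $\bar x\in\relint(\conv(X))$ with $A\bar x\geq b$ is feasible for~\eqref{eq:geoffrion}; and if $v^\star=-\infty$ the chain of inequalities already collapses all three quantities to $-\infty$, so I may assume $v^\star$ is finite.

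With $v^\star$ finite, the plan is to view~\eqref{eq:geoffrion} as an ordinary convex program whose ground (convex) set is $\conv(X)$ and whose only constraints are the affine inequalities $Ax\geq b$. Because all constraints are affine, the form of Slater's condition needed for strong duality is precisely the existence of a point of $\relint(\conv(X))$ satisfying $Ax\geq b$ — that is, exactly our hypothesis — see, e.g.,~\cite[Theorem~28.2]{rockafellar:1970}. This yields no duality gap, i.e., $v^L=v^\star$, and combined with the previous paragraph, $v^L=\bar v^\star=v^\star$.

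The conceptual content is entirely in the first step; after that, the statement is a textbook fact about convex programming. The delicate point in writing it out carefully is the last step: one must use a version of strong duality valid for an arbitrary (possibly non-closed, non-polyhedral) convex ground set, and use the refined Slater condition that, since the constraints are affine, requires only a feasible point in the \emph{relative interior} of $\conv(X)$ and not one strictly satisfying the constraints. An alternative that sidesteps worrying about $\conv(X)$ not being closed is to first prove $v^\star=\bar v^\star$ via the line-segment principle — for $\bar x\in\relint(\conv(X))$ and any $x'$ feasible for~\eqref{eq:geoffrion-closed}, the points $(1-t)x'+t\bar x$ lie in $\conv(X)$ for $t\in(0,1]$, are feasible for~\eqref{eq:geoffrion}, and tend to $x'$ — and then apply strong duality to the \emph{closed} program~\eqref{eq:geoffrion-closed}, using $\relint(\cconv(X))=\relint(\conv(X))$.
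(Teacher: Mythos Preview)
Your argument is correct and follows essentially the same route as the paper: identify $\G$ as the Lagrangian dual function of the convex program~\eqref{eq:geoffrion} (since affine infima over $X$ and $\conv(X)$ coincide), dispose of the case $v^\star=-\infty$ via the weak inequalities $v^L\leq\bar v^\star\leq v^\star$, and in the finite case invoke strong convex duality under the relaxed Slater condition for affine constraints. Your write-up is more explicit about why only a relative-interior feasible point is needed and offers the line-segment alternative, but the structure and the key idea match the paper's proof.
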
}

The challenge for extending Geoffrion's theorem beyond finiteness and rationality is thus to understand integer programs whose feasibility set lies on the boundary of $\conv(X)$.

Our three main results in this regard are presented next.

When $X$ is the set of integer points of a non-rational polyhedron, $\cconv(X)$ is not necessarily a polyhedron. The first result deals with the case when $\cconv(X)$ is {\em locally polyhedral}, namely, that its intersection with every box is a polytope.

\begin{theorem}\label{thm:loc-poly}
    Suppose that $\cconv(X)$ is locally polyhedral. If~\eqref{eq:geoffrion-closed} admits an optimal solution, then $v^L = \bar v^\star$.
\end{theorem}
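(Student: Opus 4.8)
The plan is to exploit the fact that $v^L$ equals the optimal value of the problem obtained from \eqref{eq:geoffrion-closed} by replacing $\cconv(X)$ with the \emph{polyhedral} set that it coincides with locally near an optimal solution. Concretely, let $x^\star$ be an optimal solution of \eqref{eq:geoffrion-closed}, so $x^\star \in \cconv(X)$, $Ax^\star \geq b$, and $c\cdot x^\star = \bar v^\star$. Pick a box $B$ that contains $x^\star$ in its interior; by hypothesis $Q \coloneqq \cconv(X)\cap B$ is a polytope. The first step is to show that $x^\star$ is also optimal for the auxiliary \emph{linear} program $\min\{c\cdot x : Ax\geq b,\ x\in Q\}$ — this is immediate since $Q\subseteq\cconv(X)$ and $x^\star\in Q$. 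Let $\tilde v$ denote this optimal value, so $\tilde v=\bar v^\star$.

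Next I would reduce to the finite case. Since $Q$ is a polytope, $Q=\conv(V)$ for a finite set $V$ of vertices; I want to compare the Lagrangian dual of the original program with that of the program $\min\{c\cdot x : Ax\geq b,\ x\in V\}$. By the classical Geoffrion theorem (the finite case, stated in the excerpt), the latter has Lagrangian dual value equal to $\min\{c\cdot x: Ax\geq b,\ x\in\conv(V)\}=\tilde v=\bar v^\star$, \emph{provided} that program is feasible — which it is, because $x^\star\in Q=\conv(V)$ satisfies $Ax^\star\geq b$. So it remains only to relate $v^L$ (the dual value over $X$) to the dual value over $V$. The inequality $v^L\leq \bar v^\star$ is standard (weak duality: $\G(\lambda)\leq c\cdot x$ for any feasible $x$ of \eqref{eq:master}, and one checks \eqref{eq:master} is feasible here, or more directly $\G(\lambda)\leq \bar v^\star$ follows by taking infimum over $\cconv(X)$ using that $\lambda\cdot(b-Ax)\leq 0$ on the feasible region and a closure/continuity argument). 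The substantive direction is $v^L\geq \bar v^\star$.

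For that direction, the key step is: for a well-chosen multiplier $\lambda^\star$ coming from LP duality of $\min\{c\cdot x: Ax\geq b,\ x\in Q\}$, I claim $\G(\lambda^\star)\geq \bar v^\star$, i.e. $\inf\{(c-\lambda^{\star\!\top}A)\cdot x + \lambda^\star\cdot b : x\in X\}\geq \bar v^\star$. Let $f(x)\coloneqq (c-\lambda^{\star\!\top}A)\cdot x+\lambda^\star\cdot b$, an affine function. Inside the box $B$, we have $X\cap B\subseteq Q$, and by LP duality $f(x)\geq \tilde v=\bar v^\star$ for all $x\in Q$, hence for all $x\in X\cap B$. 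The remaining worry is the points of $X$ outside $B$. Here I would use that the recession cone of $\cconv(X)$ is contained in the recession cone of $Q$'s "polyhedral germ," or more robustly: choose $B$ large enough and use that $f$ is bounded below on $\cconv(X)$ by $\bar v^\star$ — this is precisely the content of LP strong duality applied to $\min\{c\cdot x: Ax\geq b,\ x\in\cconv(X)\}$ once we know an optimal solution exists, combined with the observation that $\lambda^\star\geq 0$ forces $f$ to be a valid lower-bounding function. In fact the cleanest route is: since \eqref{eq:geoffrion-closed} attains its optimum and $\cconv(X)$ is convex, the separating-hyperplane / subgradient argument produces $\lambda^\star\in\R_+^m$ with $c\cdot x+\lambda^\star\cdot(b-Ax)\geq \bar v^\star$ for all $x\in\cconv(X)\supseteq X$; local polyhedrality is what guarantees this $\lambda^\star$ actually exists (no "asymptotic" supporting hyperplane that fails to be attained, as in Example~\ref{ex:ex2}). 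Then $\G(\lambda^\star)\geq\bar v^\star$, giving $v^L\geq\bar v^\star$.

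I expect the main obstacle to be rigorously producing the multiplier $\lambda^\star$: in the non-polyhedral setting a supporting hyperplane at $x^\star$ to the feasible region of \eqref{eq:geoffrion-closed} may involve the constraint $Ax\geq b$ only "in the limit," which is exactly how Example~\ref{ex:ex2} defeats the theorem. The local polyhedrality hypothesis must be used to rule this out — concretely, by passing to the polytope $Q=\cconv(X)\cap B$, running finite LP duality there to get $\lambda^\star$, and then arguing that $f(x)=(c-\lambda^{\star\!\top}A)x+\lambda^\star b$ stays $\geq\bar v^\star$ on \emph{all} of $\cconv(X)$, not just on $Q$. That global extension is the delicate point; I would handle it by showing the recession cone of $\cconv(X)$ lies in $\{d : f(d-\text{const})\geq 0\}$, using that near $x^\star$ the sets $\cconv(X)$ and the polyhedron generated by $Q$ agree, so any recession direction of $\cconv(X)$ is a recession direction of that polyhedron, on which $f$ is already known to be nonnegative along directions by LP duality.
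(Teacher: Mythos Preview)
Your setup is exactly right, and you correctly locate the crux: after extracting a multiplier $\lambda^\star$ from LP duality on the local polytope $Q=\cconv(X)\cap B$, one must show that the affine function $f(x)=(c-A^\top\lambda^\star)\cdot x+\lambda^\star\cdot b$ stays $\geq \bar v^\star$ on \emph{all} of $\cconv(X)$, not just on $Q$. But your proposed fix via recession cones does not close this gap. Controlling $f$ along recession directions of $\cconv(X)$ only bounds $f$ at infinity; it does not tell you that $f(y)\geq\bar v^\star$ at an arbitrary $y\in\cconv(X)\setminus B$, since in general such a $y$ cannot be written as $q+d$ with $q\in Q$ and $d$ a recession direction. (And ``the polyhedron generated by $Q$'' is not well defined: $Q$ is a polytope, so its recession cone is $\{0\}$, and which unbounded polyhedron you mean depends on an arbitrary choice of inequality description.)

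The missing observation, which is what the paper uses, is simpler and bypasses recession cones entirely. Write the polytope $\cconv(X)\cap B$ by inequalities and let $Dx\geq e$ collect those that are \emph{active} at $x^\star$. Because $x^\star$ lies in the interior of $B$, none of the active inequalities can be a box constraint; each row of $Dx\geq e$ therefore defines a hyperplane that supports the convex set $\cconv(X)$ at $x^\star$, hence is valid for $\cconv(X)$ \emph{globally}. LP duality (Farkas) at $x^\star$ now yields $\lambda\geq 0$, $\mu\geq 0$ with $c^\top=\lambda^\top A+\mu^\top D$ and $\lambda\cdot b+\mu\cdot e\geq\bar v^\star$, and since $Dx\geq e$ holds on all of $\cconv(X)$ one gets directly
\[
\G(\lambda)\;\geq\;\lambda\cdot b+\inf_{x\in\cconv(X)}\mu^\top Dx\;\geq\;\lambda\cdot b+\mu\cdot e\;\geq\;\bar v^\star\,.
\]
So the global extension comes for free from convexity once you isolate the active constraints; no analysis of recession directions is needed.
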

Note that while our original aim was to explore the case where $X$ is the set of integer points in an unbounded polyhedron which is not necessarily rational, the above result provides sufficient conditions even is cases where $X$ is the set of integer points in a non-polyhedral set. As an example, consider the set $X \coloneqq \{ (x,y) \in \Z^2\colon y \geq x^2\}$. Then it is not difficult to verify that $\cconv(X)$ is locally polyhedral~\cite{dey2013some} and $X \neq P \cap \mathbb{Z}^2$ for any polyhedron $P$. To see this notice that since $\max\{x \colon (x,y) \in X\}$ is unbounded, the recession cone of any polyhedron $P$ containing $X$ must contain a vector of the form $(v_x, v_y)$ where both $v_x > 0$ and $v_y > 0$. Now it is not difficult to show that such a polyhedron will contain integer points that are not in $X$, for example by using {Lemma~2.2} from~\cite{basu2010maximal}.

In fact, our proof of Theorem~\ref{thm:loc-poly} is quite general and does not even assume that $X$ is a set of integer points. 

The second result states that when the 
feasible region is included in the plane, then Geoffrion's theorem holds.

\begin{theorem}\label{thm:dim2}
    Suppose that $n \leq 2$. If~\eqref{eq:master} is feasible, then $v^L = \bar v^\star$.
\end{theorem}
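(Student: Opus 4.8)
The plan is to reduce the case $n\le 2$ to Theorem~\ref{thm:loc-poly} by showing that $\cconv(X)$ is locally polyhedral whenever $X\subseteq\Z^2$ (or $\Z$, which is trivial), and then to handle the gap between ``feasible'' and ``admits an optimal solution,'' which is the hypothesis actually needed by Theorem~\ref{thm:loc-poly}. The first step rests on the two-dimensional structure of integer point sets: for $X\subseteq\Z^2$, if $\cconv(X)$ is bounded it is a polytope (its extreme points are integer points, hence finitely many inside a box), and if it is unbounded, its recession cone is a closed convex cone in $\R^2$, hence generated by at most two rays; intersecting $\cconv(X)$ with a large box leaves a bounded convex set whose extreme points are among the finitely many integer points of $X$ in an enlarged box together with at most finitely many ``new'' vertices created on the box facets, so the intersection is a polytope. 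I would make this precise by a direct argument: list the (finitely many) elements of $X$ inside a slightly larger box, argue that $\cconv(X)$ restricted to the original box equals the convex hull of those points plus the contribution of the recession cone, and conclude local polyhedrality.

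The second step deals with optimal solutions. If $v^L=-\infty$, then since $\G(\lambda)\le \bar v^\star$ for all $\lambda$ (weak duality, as $\cconv(X)\supseteq\conv(X)\supseteq X$ and $\G(\lambda)$ lower-bounds $c\cdot x$ over the feasible set of~\eqref{eq:geoffrion-closed}), we get $\bar v^\star=-\infty=v^L$ and we are done. If $v^L>-\infty$, I would argue that~\eqref{eq:geoffrion-closed} is bounded below and, because $\cconv(X)$ is locally polyhedral, its feasible region $\cconv(X)\cap\{x\colon Ax\ge b\}$ is a locally polyhedral set on which the linear function $c\cdot x$ is bounded below; in dimension $\le 2$ such an infimum is attained (a bounded-below linear function on a closed convex subset of $\R^2$ whose intersection with every box is a polytope attains its minimum — one can descend along the feasible region, which is locally a finite union of segments and rays, and the recession directions compatible with boundedness force attainment). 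With an optimal solution in hand, Theorem~\ref{thm:loc-poly} applies and yields $v^L=\bar v^\star$.

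I expect the main obstacle to be the attainment argument in the second step: one must rule out the pathology where $c\cdot x$ is bounded below on the feasible region but the infimum is approached only ``at infinity'' along a recession ray, or only in the limit at a non-attained boundary point of $\cconv(X)$ — exactly the phenomenon exploited in Examples~\ref{ex:ex1} and~\ref{ex:ex2}. The point is that in $\R^2$ this cannot happen once we know $\cconv(X)\cap\{Ax\ge b\}$ is locally polyhedral and $c\cdot x$ is bounded below on it: along any recession direction $r$ of this set we must have $c\cdot r\ge 0$, and the boundary of a locally polyhedral planar set is a locally finite union of segments and rays, so a minimizing sequence either stabilizes on one such edge (where the minimum over a segment or a ray with $c\cdot r\ge 0$ is attained) or escapes to infinity along a recession ray with $c\cdot r=0$, in which case the value is already attained at the base of that ray. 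Making this dichotomy airtight — in particular verifying that the feasible region, being the intersection of the closed locally polyhedral set $\cconv(X)$ with a polyhedron, is itself closed and locally polyhedral — is the crux; the rest is bookkeeping and an invocation of Theorem~\ref{thm:loc-poly}.
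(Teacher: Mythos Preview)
Your overall architecture matches the paper's: first establish that $\cconv(X)$ is locally polyhedral for $X\subseteq\Z^2$ (the paper isolates this as a lemma, with a cleaner argument via classifying extreme points of $\cconv(X)\cap P$), then feed this into Theorem~\ref{thm:loc-poly}. The divergence is in how you bridge ``\eqref{eq:master} feasible'' and ``\eqref{eq:geoffrion-closed} has an optimal solution,'' and there your proposal has a genuine gap.

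Your branch ``$v^L=-\infty$'' is argued in the wrong direction. From $\G(\lambda)\le\bar v^\star$ you only get $v^L\le\bar v^\star$; knowing $v^L=-\infty$ says nothing about $\bar v^\star$. (The implication you want, $\bar v^\star=-\infty\Rightarrow v^L=-\infty$, is the trivial one.) So this case is not handled, and the rest of your plan does not cover it either: if $\bar v^\star>-\infty$ you appeal to attainment, but you have not excluded the possibility $v^L=-\infty$ with $\bar v^\star$ finite. That possibility is, of course, exactly what the theorem rules out, so you cannot assume it away.

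Even in the $v^L>-\infty$ branch, the attainment claim (``a bounded-below linear function on a closed, locally polyhedral subset of $\R^2$ attains its minimum'') is more delicate than your sketch suggests when $\dim Y=2$, because intersecting $\cconv(X)$ with $\{Ax\ge b\}$ can create infinitely many new vertices along an asymptotic direction. The paper avoids this entirely by splitting on $\dim Y$ rather than on $v^L$: if $\dim Y=2$ then $Y$ has nonempty interior, so the relative interior of $\conv(X)$ meets $\{Ax\ge b\}$ and Proposition~\ref{prop:slater} applies directly (no attainment needed); if $\dim Y\le1$ then $Y$ is a point, segment, ray, or line, where either the minimum is trivially attained (and Theorem~\ref{thm:loc-poly} finishes) or $\bar v^\star=-\infty$ (and Proposition~\ref{prop:ineq} finishes). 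Reorganizing your case split along these lines would close both gaps at once.
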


Example~\ref{ex:ex1} shows that under the conditions of Theorems~\ref{thm:loc-poly} and~\ref{thm:dim2}, while $v^L = \bar v^\star$ must hold, the equality $v^L = v^\star$ may not necessarily hold.

The third result somehow shows a reverse phenomenon 
with regards to the standard assumptions for Geoffrion's theorem to hold: instead of requiring the rationality of the constraints defining $X$, we require rationality for the other constraints.





\begin{theorem}\label{thm:rationa-constr}
    Suppose that the system $Ax \geq b$ is a rational single-row system. If $\inf\{c \cdot x \colon x \in X\}$ is finite, then $v^L = \bar v^\star = v^\star$.
\end{theorem}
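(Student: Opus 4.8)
The plan is to clear denominators, dispatch the ``Slater'' case via Proposition~\ref{prop:slater}, and handle the rest with a quantitative-gap argument that sends the multiplier to infinity. Write the single constraint as $a\cdot x\geq\beta$ with $a,\beta$ rational. Multiplying this row by a positive integer $q$ turns $\G(\lambda)$ into $\G(q\lambda)$ and does not change the feasible regions of~\eqref{eq:master}, \eqref{eq:geoffrion}, \eqref{eq:geoffrion-closed}; hence $v^L$, $v^\star$, $\bar v^\star$ are unchanged, and choosing $q$ to be a common denominator I may assume $a\in\Z^n$ and $\beta\in\Z$. I would also record the general chain $v^L\leq\bar v^\star\leq v^\star$: the second inequality is clear since $\conv(X)\subseteq\cconv(X)$, and for the first, $\G(\lambda)=\lambda\beta+\inf\{(c-\lambda a)\cdot x:x\in\cconv(X)\}$ (the infimum of a linear functional over a set, its convex hull, and its closure all coincide) is at most $c\cdot x$ for every $x\in\cconv(X)$ with $a\cdot x\geq\beta$, so taking the infimum over such $x$ and the supremum over $\lambda\geq0$ gives $v^L\leq\bar v^\star$. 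It therefore suffices to prove $v^\star\leq v^L$. If $\relint(\conv(X))$ meets $\{x:a\cdot x\geq\beta\}$, this is immediate from Proposition~\ref{prop:slater}, so from now on I assume $\relint(\conv(X))\subseteq\{x:a\cdot x<\beta\}$.

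In this case the hypothesis forces $X\neq\varnothing$; fixing $x'\in\relint(\conv(X))$, for any $x\in\conv(X)$ the half-open segment $\{(1-t)x'+tx:t\in[0,1)\}$ lies in $\relint(\conv(X))$, so $(1-t)(a\cdot x')+t(a\cdot x)<\beta$ for $t\in[0,1)$, and letting $t\to1$ gives $a\cdot x\leq\beta$ throughout $\conv(X)$, in particular on $X$. Since $a\cdot x\in\Z$ for $x\in X$, every $x\in X$ thus satisfies either $a\cdot x=\beta$ or $a\cdot x\leq\beta-1$; write $X=X'\sqcup X''$ accordingly. Set $\gamma:=\inf\{c\cdot x:x\in X\}$, finite by hypothesis, so $c\cdot x\geq\gamma$ on $X$. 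For $\lambda\geq0$, the Lagrangian $c\cdot x+\lambda(\beta-a\cdot x)$ equals $c\cdot x$ on $X'$ and is at least $\gamma+\lambda$ on $X''$ (as $\beta-a\cdot x\geq1$ there), hence $\G(\lambda)\geq\min\bigl(\inf\{c\cdot x:x\in X'\},\,\gamma+\lambda\bigr)$. By the dichotomy $X'=\{x\in X:a\cdot x\geq\beta\}$, so $\inf\{c\cdot x:x\in X'\}$ is exactly the optimal value $v^{\mathrm{master}}$ of~\eqref{eq:master}, and $v^{\mathrm{master}}\geq\gamma$.

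Now either~\eqref{eq:master} is infeasible, i.e.\ $v^{\mathrm{master}}=+\infty$, in which case $X=X''$ gives $\cconv(X)\subseteq\{x:a\cdot x\leq\beta-1\}$, so $v^\star=\bar v^\star=+\infty$, and $\G(\lambda)\geq\gamma+\lambda\to+\infty$ gives $v^L=+\infty$; or $v^{\mathrm{master}}$ is finite, and then taking any $\lambda\geq v^{\mathrm{master}}-\gamma$ gives $\G(\lambda)\geq v^{\mathrm{master}}$, hence $v^L\geq v^{\mathrm{master}}\geq v^\star$. Either way $v^\star\leq v^L$, which with $v^L\leq\bar v^\star\leq v^\star$ completes the proof.

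The only genuinely non-formal step is the passage, in the non-Slater case, from ``$a\cdot x<\beta$'' to the true unit gap ``$a\cdot x\leq\beta-1$''---this is precisely where single-row-ness and rationality enter, and it is what allows $\lambda\to+\infty$ to overwhelm the infeasible integer points while finiteness of $\inf\{c\cdot x:x\in X\}$ prevents $\G(\lambda)$ from dropping to $-\infty$. Beyond that, the only care required is the relative-interior limiting argument for $a\cdot x\leq\beta$ on $\conv(X)$ and the bookkeeping of the degenerate cases ($X=\varnothing$, infeasible~\eqref{eq:master}, values equal to $+\infty$); everything else is weak-duality manipulation.
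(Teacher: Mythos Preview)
Your proof is correct and follows essentially the same strategy as the paper: invoke Proposition~\ref{prop:slater} when Slater's condition holds, and otherwise exploit the integrality of $a$ and $\beta$ to get the unit gap $\beta - a\cdot x \geq 1$ on the infeasible part of $X$, then drive $\lambda\to\infty$ against the finite lower bound $\inf\{c\cdot x : x\in X\}$. Your clean Slater/non-Slater dichotomy is a tidier organization than the paper's three cases (i)--(iii), but the substance is identical.
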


\section{Proofs}

\subsection{Preliminary results}\label{sec:prelim_results}

Even when the conclusion of Geoffrion's theorem does not hold, the best bound given by Lagrangian relaxation and the optimal values of~\eqref{eq:geoffrion} and~\eqref{eq:geoffrion-closed} are always ordered is some way.

\begin{proposition}\label{prop:ineq}
    The following inequalities always hold: $v^L \leq \bar v^\star \leq v^\star$.
\end{proposition}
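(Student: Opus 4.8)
The plan is to prove the two inequalities separately, starting from the easy one on the right. For $\bar v^\star \leq v^\star$, I would simply observe that $\conv(X) \subseteq \cconv(X)$, so the feasible region of~\eqref{eq:geoffrion} is contained in that of~\eqref{eq:geoffrion-closed}; since $\bar v^\star$ and $v^\star$ are infima of the same linear objective $c \cdot x$ over these nested sets, enlarging the domain can only lower the infimum, whence $\bar v^\star \leq v^\star$. (If both feasible sets are empty, both values equal $+\infty$ with the convention $\inf\varnothing = +\infty$, and the inequality still holds.)

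For $v^L \leq \bar v^\star$, since $v^L = \sup\{\G(\lambda) \colon \lambda \in \R_+^m\}$, it suffices to show $\G(\lambda) \leq \bar v^\star$ for every fixed $\lambda \in \R_+^m$. The key step is that the Lagrangian $x \mapsto c \cdot x + \lambda \cdot (b - Ax)$ is an affine, hence continuous, function of $x$: its infimum over $X$ equals its infimum over $\conv(X)$ (a convex combination of points cannot produce a value below the minimum of the values at those points), and by continuity this equals its infimum over $\cconv(X)$. Therefore
\[
\G(\lambda) = \inf\{ c \cdot x + \lambda \cdot (b - Ax) \colon x \in \cconv(X)\}.
\]
Restricting this infimum to the (possibly smaller) set of $x \in \cconv(X)$ that also satisfy $Ax \geq b$ can only increase it, so
\[
\G(\lambda) \leq \inf\{ c \cdot x + \lambda \cdot (b - Ax) \colon x \in \cconv(X),\ Ax \geq b \}.
\]
For any such $x$ we have $b - Ax \leq 0$ and $\lambda \geq 0$, hence $\lambda \cdot (b - Ax) \leq 0$ and $c \cdot x + \lambda \cdot (b - Ax) \leq c \cdot x$; taking the infimum gives $\G(\lambda) \leq \inf\{c \cdot x \colon x \in \cconv(X),\ Ax \geq b\} = \bar v^\star$. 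Taking the supremum over $\lambda \in \R_+^m$ yields $v^L \leq \bar v^\star$. (When~\eqref{eq:geoffrion-closed} is infeasible, $\bar v^\star = +\infty$ and the bound is vacuous; when $\bar v^\star = -\infty$, the argument forces $\G(\lambda) = -\infty$ for all $\lambda$ and hence $v^L = -\infty$.)

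The only genuinely delicate point is the interchange of the infimum of the affine Lagrangian among $X$, $\conv(X)$, and $\cconv(X)$, together with keeping the reasoning valid under the conventions $\inf\varnothing = +\infty$ and values taken in $\Rext$; everything else reduces to one-line monotonicity or sign arguments, so I do not anticipate a real obstacle.
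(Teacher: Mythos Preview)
Your proof is correct and follows essentially the same approach as the paper: bound $\G(\lambda)$ above by $c\cdot x + \lambda\cdot(b-Ax)$ for any $x$ feasible in~\eqref{eq:geoffrion-closed}, then drop the nonpositive penalty term. The only difference is that you spell out explicitly why the infimum of the affine Lagrangian over $X$, $\conv(X)$, and $\cconv(X)$ coincide, whereas the paper's two-line argument leaves this implicit (it is stated parenthetically later, in the proof of Theorem~\ref{thm:loc-poly}).
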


\begin{proof}
    For every feasible solution $x$ of~\eqref{eq:geoffrion-closed} and every $\lambda \in \R_+^m$, we have \[
    \G(\lambda) \leq c \cdot x + \lambda \cdot (Ax - b) \leq c \cdot x \, .
    \]
    This shows that the inequality $v^L \leq \bar v ^\star$ holds. The inequality $\bar v^\star \leq v^\star$ is immediate.
\end{proof}

We finish this subsection with a proof of Proposition~\ref{prop:slater}. 

\begin{proof}[Proof of Proposition~\ref{prop:slater}]
    If $v^\star = -\infty$, then Proposition~\ref{prop:ineq} ensures that the desired equalities hold. Suppose thus that $v^\star > -\infty$. The intersection of the relative interior of $\conv(X)$ with $\{x\in \R^n \colon Ax \geq b\}$ being non-empty is precisely the (relaxed) Slater's condition, which implies~\cite{BenTalNemirovski2023} together with $v^\star > -\infty$ that strong duality holds for~\eqref{eq:geoffrion}. The function $\G$ being the dual function of~\eqref{eq:geoffrion} as well (the infimum of a linear function on $X$ does not change when taken on $\conv(X)$), we get $v^L = v^\star$, and we conclude with Proposition~\ref{prop:ineq} again.
\end{proof}

\subsection{Proof of Theorem~\ref{thm:loc-poly}}


\begin{proof}[Proof of Theorem~\ref{thm:loc-poly}]
Suppose that \eqref{eq:geoffrion-closed} admits an optimal solution $x^\star$. Let $Q$ be a full-dimensional box with $x^\star$ in its interior. The point $x^\star$  is a global minimum of $c\cdot x$ on the polyhedron $K=\{ x\in\R^n\colon Ax \geq b\}\cap (Q\cap\cconv(X))$. Denote by $D x \geq e$ the active inequalities of the polyhedron $Q\cap\cconv(X)$ at $x^\star$. Since $c \cdot x\geq \bar v^\star$ is a valid inequality for $K$ and contains the point $x^\star\in K$, there exist $\lambda,\mu$ with nonnegative entries such that $\lambda^{\top}A+\mu^{\top}D=c^{\top}$ and $\lambda \cdot b+ \mu \cdot e\geq \bar v^\star$ (e.g., with Farkas's lemma).

As $x^\star$ is in the interior of $Q$, the active inequalities $D x \geq e$ do not contain any inequality defining $Q$ and thus $\cconv(X)\subseteq\{x\colon D x \geq e\}$. This is used for the inequality before the last one in the following chain of equalities and inequalities:
\[
v^L \geq \G(\lambda) {\geq} \lambda \cdot b+\inf_{x \in \cconv(X)}(c^{\top}-\lambda^{\top} A) x = \lambda\cdot b+\inf_{x\in\cconv(X)}\mu^{\top}Dx \geq \lambda\cdot b+\mu \cdot e \geq \bar v^\star \, .
 \]
 (The second inequality is actually an equality: the infimum of a linear function on $X$ does not change when taken on $\conv(X)$, and by continuity does not change either  when taken on $\cconv(X)$. But this is not needed for the proof to hold.)
 The reverse inequality $v^L\leq \bar v^\star$ results from Proposition~\ref{prop:ineq}.
\end{proof}

\subsection{Proof of Theorem~\ref{thm:dim2}}

The proof relies on a preliminary lemma.

\begin{lemma}\label{lem:local2} The set $\cconv(X)$ is locally polyhedral for every subset $X \subseteq \Z^2$.
\end{lemma}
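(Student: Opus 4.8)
The plan is to show that for any box $Q$ in $\R^2$, the set $\cconv(X) \cap Q$ is a polytope, by reducing to a local finiteness statement about the integer points of $X$ that can be "seen" near $Q$. First I would observe that we may assume $X$ is nonempty, and fix a box $Q = [l_1,u_1]\times[l_2,u_2]$. The key geometric idea is that $\cconv(X) \cap Q$ depends only on those points of $X$ lying in a slightly enlarged box $Q'$ together with the recession cone of $\cconv(X)$: more precisely, $\cconv(X) = \conv(X) + \operatorname{rec}(\cconv(X))$ is not quite available for free (the closure can genuinely enlarge things, as Example~\ref{ex:ex1} shows), so instead I would argue directly that a point of $\cconv(X) \cap Q$ is a limit of convex combinations of points of $X$, and by Carathéodory in $\R^2$ each such combination uses at most three points of $X$.

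Next I would separate the points of $X$ into those inside a bounded region and "far away" directions. Since $\cconv(X) \subseteq \R^2$, its recession cone $C$ is a closed convex cone in the plane, hence either $\{0\}$, a ray, a halfplane, or all of $\R^2$ — in every case $C$ is polyhedral (this is special to dimension $\le 2$: every closed convex cone in $\R^2$ is finitely generated). Then I would show there is a finite subset $F \subseteq X$ and a bounded set $B$ such that every point of $X$ lies in $\conv(F) + C$, or at least that $\cconv(X) = \cconv(\conv(F) + C)$ for a suitable finite $F$; equivalently, $\cconv(X)$ is itself a (possibly non-closed, but with polyhedral closure) set whose closure is $\conv(F) + C$. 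The honest route: take $F$ to be the set of points of $X$ within distance, say, $2\operatorname{diam}(Q) + 2$ of $Q$ together with finitely many "generator" points certifying the extreme rays of $C$; then one checks that no point of $X$ outside $\conv(F)+C$ can contribute to $\cconv(X) \cap Q$, because moving along a direction not in $C$ eventually leaves every bounded region, and the contributions that matter near $Q$ are captured by $F$ and $C$.

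Finally, granting $\cconv(\conv(F)+C)$ with $F$ finite and $C$ polyhedral, note $\conv(F)+C$ is already a polyhedron (Minkowski sum of a polytope and a polyhedral cone), so it is closed, and therefore $\cconv(X)\cap Q = (\conv(F)+C)\cap Q$ is the intersection of a polyhedron with a box, i.e.\ a polytope. That gives local polyhedrality.

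\medskip

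I expect the main obstacle to be the middle step: making rigorous the claim that only finitely many points of $X$ (plus the recession cone) are relevant to $\cconv(X) \cap Q$, given that taking the closure of $\conv(X)$ can add points "at the boundary" that are not convex combinations of finitely many points of $X$ (as in Example~\ref{ex:ex1}, where the ray $y=\sqrt2 x$ appears in the closure). The delicate point is that such boundary-completion phenomena in the plane only occur "in recession directions," so they are absorbed into $C$; quantifying this — e.g.\ showing that if $x_k \in X$ is a sequence with $\operatorname{conv}$-combinations converging into $Q$ but the $x_k$ themselves are unbounded, then the limiting direction lies in $C$ and the bounded part of the combination stays near $Q$ — is where the dimension-$2$ hypothesis (via Carathéodory with three points and the simple classification of planar cones) is really used. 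The remaining steps (planar cones are polyhedral; polytope $+$ polyhedral cone is a closed polyhedron; polyhedron $\cap$ box is a polytope) are standard.
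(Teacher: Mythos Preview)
Your plan has a genuine gap in the ``middle step,'' and it is not just a matter of filling in details: the specific prescription for $F$ is wrong. Take
\[
X \;=\; \{(-100,1),\,(100,1)\}\;\cup\;\{(0,n)\colon n\in\Z,\ n\geq 1\}\;\subseteq\;\Z^2,
\qquad Q \;=\; [-1,1]\times[0,2].
\]
Here $\cconv(X)=[-100,100]\times[1,\infty)$, the recession cone is $C=\{0\}\times\R_{\geq 0}$, and $\cconv(X)\cap Q=[-1,1]\times[1,2]$. The only points of $X$ within distance $2\operatorname{diam}(Q)+2$ of $Q$ lie on the $y$-axis, so your $F\subseteq\{0\}\times\Z$, whence $\conv(F)+C\subseteq\{0\}\times\R$ and $(\conv(F)+C)\cap Q=\{0\}\times[1,2]$, strictly smaller than $\cconv(X)\cap Q$. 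No ``generator points for $C$'' help, since $C$ points along the $y$-axis. The phenomenon is not about limits or unbounded sequences at all: the midpoint of the two far-away points $(\pm 100,1)$ lands in $Q$, and neither your local $F$ nor the recession cone sees this. So the heuristic ``far points only matter through $C$'' is false already for bounded configurations.

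The paper avoids this difficulty entirely by a much shorter and more direct argument. It never tries to identify which points of $X$ are ``relevant'' near $Q$. Instead it shows that $K\cap P$ (with $K=\cconv(X)$ and $P$ a box) has only finitely many extreme points, hence is a polytope. The classification is: an extreme point of $K\cap P$ is either an extreme point of $K$ lying in $P$, a vertex of $P$, or lies on an edge of $P$ and on the boundary of $K$. For the first type, the key fact (Klee's theorem) is that extreme points of $\cconv(X)$ belong to $X$ whenever $X$ is closed; since $X\subseteq\Z^2$ is discrete, only finitely many lie in $P$. For the third type, planarity is used in the simplest possible way: each edge of $P$ meets the convex set $K$ in an interval, so contributes at most two extreme points. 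That is the whole proof. If you want to repair your approach, the missing ingredient is precisely this Klee-type fact that the extreme points of $\cconv(X)$ already lie in $X$; once you have it, decomposing via the recession cone becomes unnecessary.
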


\begin{proof}
Let $K=\cconv(X)$. To show that $K$ is locally polyhedral, we must show that $K\cap P$ is a polytope for every box $P\subseteq \R^2$. The extreme points of $K\cap P$ are either: \begin{enumerate*}[label=(\roman*)]
    \item\label{extrKP} extreme points of $K$ contained in $P$, \item\label{extrP} extreme points of $P$, or \item \label{edges} intersections of edges of $P$ with the boundary of $K$. 
 \end{enumerate*} Since $X$ is a closed, set the extreme points of $K$ belong to $X\subseteq\Z^n$ (see, for instance, Theorem 3.5 in~\cite{Klee1957}). This implies that the extreme points of $K$ contained in $P$ are finitely many. Therefore, $K\cap P$ has finitely many extreme points of type~\ref{extrKP} or type~\ref{extrP}. Finally, the intersection of each edge of $P$ with the boundary of {$K\subseteq\R^2$} can contain at most two extreme points of $K\cap P$. Therefore, since $P$ has finitely many edges, $K\cap P$  has finitely many extreme points of type~\ref{edges}.  We conclude that $K\cap P$ has finitely many extreme points, and thus it is a polytope.
\end{proof}

Note that the statement of the above result does not hold in $\mathbb{R}^3$; see example in~\cite{dey2013some}.

\begin{proof}[Proof of Theorem~\ref{thm:dim2}]
Let $Y \coloneqq \cconv(X) \cap \{x \in \R^n \colon A x \geq  b \}$. Only three cases are possible. If \eqref{eq:geoffrion-closed} admits an optimal solution, then we get the conclusion by combining Lemma~\ref{lem:local2} and Theorem~\ref{thm:loc-poly}. If $Y$ is of dimension $1$ and \eqref{eq:geoffrion-closed} does not admit an optimal solution, then $Y$ is a half-line or a line, we have $\bar v^\star = -\infty$, and we get the conclusion by Proposition~\ref{prop:ineq}. If $Y$ is of dimension $2$, then $Y$ has a non-empty interior, {which shows that the relative interior of $\conv(X)$ has a non-empty intersection with $\{ x\in\R^n\colon A x\geq b\}$, and then we conclude with Proposition~\ref{prop:slater}.}
\end{proof}

\subsection{Proof of Theorem~\ref{thm:rationa-constr}}

\begin{proof}[Proof of Theorem~\ref{thm:rationa-constr}]
Without loss of generality, we assume that all data describing $Ax\geq b$ is integral. Denote by $a^\top$ the single row of $A$ and, to ease notation, for $\beta \in \Z$, set 
\[
H^\beta \coloneqq \{x \in \R^n \colon a \cdot x = \beta\}\, ,\quad H^{\leq \beta} \coloneqq \{x \in \R^n \colon a \cdot x \leq \beta\}\, , \quad H^{\geq \beta} \coloneqq \{x \in \R^n \colon a \cdot x \geq \beta\}\, .
\] There are three possible cases:
\begin{enumerate}[label=(\roman*)]
    \item\label{one} $X \subseteq H^{\geq b}$.
    \item\label{two} $H^{\beta_1} \cap X \neq \varnothing$ and $H^{\beta_2} \cap X\neq \varnothing$ for $b \leq \beta_1 < \beta_2$.
    \item\label{one-border} $H^{\geq b} \cap X\subseteq H^b$.
\end{enumerate}

{
In case~\ref{one}, we have $\conv(X) \subseteq H^{\geq b}$, and since $X$ is non-empty ($\inf\{c\cdot x\colon x \in X\}$ being finite), we conclude with Proposition~\ref{prop:slater}. In case~\ref{two}, there are points in $\{x \in \R^n \colon \beta_1 < a \cdot x < \beta_2\}$ that belong to $\conv(X)$. By Theorem~6.1 in~\cite{rockafellar:1970}, every point in $\conv(X)$ is arbitrarily close to the relative interior of $\conv(X)$, and we conclude again with Proposition~\ref{prop:slater}.} In case~\ref{one-border}, we have \[
v^L \geq \min \left( \inf\{c \cdot x \colon x \in H^b \cap X\},
\inf\{c \cdot x + \lambda(b - a \cdot x) \colon x \in H^{\leq b-1} \cap X\}\right) \, .
\]
The first term in the minimum is lower-bounded by { $v^\star$. In the second term, the expression $b-a\cdot x$ is at least $1$. Thus, for every $\lambda \in \R_+$, we have
\[
v^L \geq \min \left( v^\star,
\lambda + \inf\{c \cdot x \colon x \in X\}\right) \, .
\]
The quantity $\inf\{c \cdot x \colon x \in X\}$ being finite, choosing $\lambda$ large enough shows $v^L \geq v^\star$. We conclude with Proposition~\ref{prop:ineq}.}
\end{proof}

\section{Concluding remarks}

In this paper, we investigate Geoffrion’s theorem ``beyond finiteness and rationality,'' that is, when the set $X$ of integer points is infinite and cannot be described as the integer points in a rational polyhedron. Although our results are quite general, we are left with some open questions regarding the validity of Geoffrion’s theorem in certain situations.

The first open question concerns Theorem~\ref{thm:loc-poly}. We do not know if this result is valid or not in the case the optimization problem~\eqref{eq:geoffrion-closed} does not admit an optimal solution. Finding a proof or a counterexample for this result in the case~\eqref{eq:geoffrion-closed} is bounded below but no optimal solution exists is an interesting question. We note here that~\eqref{eq:geoffrion-closed} having an optimal solution is a crucial assumption in our proof.

The second open question is about Theorem~\ref{thm:rationa-constr}. In this result, the system $Ax\geq b$ is a rational single-constraint system. We do not know if the result is valid for rational systems defined by more than one constraint. A possible proof approach for the more general case, is to find a rational hyperplane $a\cdot x=\beta$ separating $P\coloneqq\{x\colon Ax\geq b\}$ and $\cconv(X)$ and then using this hyperplane to repeat the argument in our proof of Theorem~\ref{thm:rationa-constr}. However, this separating hyperplane is not guaranteed to exist; in the following example, no rational hyperplane separating $P$ and $\cconv(X)$ exists, although Geoffrion's theorem holds true. 

\begin{example}
Consider the half-line $P \coloneqq \{(x,y,z)\in \R^3\colon x=1,y=1,z \geq 1\}$, the non-rational half-space $T \coloneqq \{(x,y,z)\in \R^3 \colon \sqrt{2}(x-y) + z \leq 1\}$, and let $X \coloneqq T \cap \Z^n$. We have that $\convc(X)=T$ since the set $\{(x,y,z)\in \R^3 \colon 1-\varepsilon\leq\sqrt{2}(x-y) + z \leq 1\}$ cannot be lattice-free for any $\varepsilon>0$ (see, for instance, Theorem 2 in~\cite{basu2010MinIneqRational}). Clearly, the only hyperplane separating $P$ and $\convc(X)$ is $\sqrt{2}(x-y) + z = 1$, which is a non-rational hyperplane. 

Consider the integer program
 \begin{equation}\label{eq:ex3}
   \begin{array}{rl}
        \text{minimize} & z \\
        \text{s.t.} &x=1 \\
         & y=1 \\
          & z \geq 1 \\
        & (x,y,z) \in X \,.
    \end{array}
    \end{equation}
Since $P\cap \convc(X)=P\cap \conv(X)=\{(1,1,1)\}$, we have that $\bar v^\star = v^\star=1$. Moreover, since $\convc(X)$ is a polyhedron such that problem~\eqref{eq:geoffrion-closed} admits an optimal solution ($x^*=y^*=z^*=1$), by Theorem~\ref{thm:loc-poly} we obtain $v^L= \bar v^\star = v^\star$. \exampleqed
\end{example}


Finally, we note that identifying additional sufficient conditions for Geoffrion's theorem to hold would be of interest. Since the polyhedrality of integer hulls of rational polyhedra is related to the finiteness of the Hilbert basis of rational polyhedral cones, a possible direction for further study is to understand whether there are connections between Geoffrion's theorem and the recently studied sufficient conditions for finitely generated sets of integer points under the action of a finitely generated group~\cite{de2025integer}.

\subsection*{Acknowledgments} Part of this work was done while the authors were benefiting from a Collaborate@ICERM support by the Institute for Computational and Experimental Research in Mathematics in Providence, RI, during August 2024. 
\bibliographystyle{plain}
\bibliography{Refs}

\end{document}